\newcommand{\define}{\textbf}
\renewcommand{\setminus}{\smallsetminus}
\renewcommand{\phi}{\varphi}
\renewcommand{\tilde}{\widetilde}
\renewcommand{\hat}{\widehat}
\newcommand{\Q}{\mathbb{Q}}
\newcommand{\Z}{\mathbb{Z}}
\newcommand{\cL}{\mathcal{L}}
\newcommand{\bdy}{\partial}
\newcommand{\tbdy}{\tilde\partial}
\newcommand{\Bdy}{\Delta}
\newcommand{\tBdy}{\tilde\Delta}
\newcommand{\tc}{\tilde{c}}
\DeclareMathOperator{\hgt}{ht}
\newtheorem*{theorem}{Theorem}
\newtheorem*{lemma}{Lemma}
\theoremstyle{definition}
\newtheorem*{remark}{Remark}
\begin{document}

\title{Schubert varieties are log Fano over the integers}
\author{Dave Anderson}
\address{Department of Mathematics\\University of Washington\\Seattle, WA 98195}
\address{Department of Mathematics\\University of British Columbia\\ BC, Canada V6T 1Z2}
\email{anderson@math.ubc.ca, dandersn@math.washington.edu}
\author{Alan Stapledon}
\email{astapldn@math.ubc.ca}
\subjclass[2000]{14M15; 14E30; 20G99}
\date{March 26, 2012}
\thanks{DA was partially supported by NSF Grant DMS-0902967.}

\begin{abstract}
Given a Schubert variety $X_w$, we exhibit a divisor $\Delta$, defined over $\Z$, such that the pair $(X_w,\Delta)$ is log Fano in all characteristics.
\end{abstract}

\maketitle

Let $X_w=\overline{BwB/B}$ be a Schubert variety in a Kac-Moody flag variety $G/B$ over an algebraically closed field of arbitrary characteristic.  Fix a reduced word for the Weyl group element $w$, and let $\phi:\tilde{X}_w \to X_w$ be the corresponding Bott-Samelson resolution.  Let $\bdy = \bdy_1+\cdots+\bdy_k$ be the complement of the open $B$-orbit in $X_w$, written as a sum of prime divisors, and define $\tbdy=\tbdy_1+\cdots+\tbdy_\ell$ similarly; the divisor $\tbdy$ is a simple normal crossings divisor.  Here $\ell=\ell(w)$ is equal to the length of $w$, which in turn is equal to the dimension of $X_w$.

Denote by $\rho$ the sum of all fundamental weights of (the simply connected form of) $G$, and let $\cL(\rho)$ be the corresponding ample line bundle on $X_w$.  Choosing $B$-invariant sections of $\cL(\rho)$ and $\phi^*\cL(\rho)$, these line bundles correspond to divisors
\[
  a_1\bdy_1 + \cdots + a_k\bdy_k, \qquad \text{ and } \qquad b_1 \tbdy_1 + \cdots + b_\ell \tbdy_\ell,
\]
for some nonnegative integers $a_i$ and $b_i$.  In fact, they are all positive, by the Lemma below.

Recall that for a normal irreducible variety $Y$ and an effective $\Q$-divisor $D$, the pair $(Y,D)$ is \define{Kawamata log terminal (klt)} if $K_Y + D$ is $\Q$-Cartier, and for all proper birational maps $\pi:Y' \to Y$, the pullback $\pi^*(K_Y + D) = K_{Y'} + D'$ has $\pi_*K_{Y'} = K_Y$ and $\lfloor D' \rfloor \leq 0$.  The pair is \define{log Fano} if it is klt and $-(K_Y+D)$ is ample.

In \cite{ss}, Schwede and Smith prove that globally $F$-regular varieties are log Fano, for some choice of boundary divisor that may depend on the characteristic.  Motivated by the fact that Schubert varieties are known to be globally $F$-regular \cite{lrt}, they ask if there exists a single boundary divisor that works uniformly, in all characteristics.  The purpose of this note is to answer their question affirmatively:

\begin{theorem}
The pair $(X_w,\Bdy)$ is log Fano, where $\Bdy = c_1 \bdy_1 + \cdots + c_k\bdy_k$, and $c_i=1-a_i/M$ for some integer $M$ greater than all $a_i$.
\end{theorem}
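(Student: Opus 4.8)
The plan is to reduce the statement to two adjunction-type identities plus the positivity of the $b_j$ supplied by the Lemma. First I would record the anticanonical formula on the Bott--Samelson resolution: since $\tilde{X}_w$ is an iterated $\P^1$-bundle, induction on the tower of fibrations gives
\[
 -K_{\tilde{X}_w} = \tbdy + \phi^*\cL(\rho),
\]
equivalently $K_{\tilde{X}_w} = -\sum_{j}(1+b_j)\,\tbdy_j$, using $\phi^*\cL(\rho)=\sum_j b_j\tbdy_j$. (The base case is $\tilde{X}_{s_i}=\P^1$, where $-K=\O(2)=\O(\mathrm{pt})+\cL(\rho)|_{\P^1}$.) On $X_w$ itself I would invoke the corresponding statement $-(K_{X_w}+\bdy)=\cL(\rho)$ --- that is, the dualizing sheaf of the Schubert variety is $\O_{X_w}(-\bdy)\otimes\cL(-\rho)$ --- which in particular shows $K_{X_w}+\bdy$ is Cartier. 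One checks these are compatible: $\phi_*\phi^*\cL(\rho)=\cL(\rho)$ forces the coefficient $b_j$ of each non-exceptional $\tbdy_j$ lying over $\bdy_i$ to equal $a_i$, so that $\phi_*K_{\tilde{X}_w}=K_{X_w}$.

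For ampleness of $-(K_{X_w}+\Bdy)$, I would write $\Bdy=\bdy-\tfrac1M\sum_i a_i\bdy_i$ and use $\sum_i a_i\bdy_i\sim\cL(\rho)$ to compute
\[
 K_{X_w}+\Bdy = (K_{X_w}+\bdy) - \tfrac1M\sum_i a_i\bdy_i \sim -\cL(\rho)-\tfrac1M\cL(\rho) = -\tfrac{M+1}{M}\cL(\rho).
\]
Thus $K_{X_w}+\Bdy$ is $\Q$-Cartier and $-(K_{X_w}+\Bdy)\sim\tfrac{M+1}{M}\cL(\rho)$ is ample, since $\cL(\rho)$ is ample. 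The choice $c_i=1-a_i/M$ is engineered precisely so that $K_{X_w}+\Bdy$ stays a negative multiple of $\cL(\rho)$ while every $c_i$ lands in $(0,1)$.

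For the klt condition I would use $\phi$ as a log resolution of $(X_w,\Bdy)$: the divisor $\tbdy$ is simple normal crossings and contains both the exceptional locus of $\phi$ and the support of the strict transform of $\Bdy$ (as $\phi$ is an isomorphism over the open $B$-orbit). By the standard fact that klt may be verified on a single log resolution, it then suffices to check $\lfloor D'\rfloor\le 0$ on $\tilde{X}_w$, where $\phi^*(K_{X_w}+\Bdy)=K_{\tilde{X}_w}+D'$ and the normalization $\phi_*K_{\tilde{X}_w}=K_{X_w}$ was noted above. Pulling back the identity from the second paragraph and subtracting $K_{\tilde{X}_w}$ gives
\[
 D' = -\tfrac{M+1}{M}\sum_j b_j\tbdy_j + \sum_j(1+b_j)\tbdy_j = \sum_j\Bigl(1-\tfrac{b_j}{M}\Bigr)\tbdy_j.
\]
Hence the log discrepancy of each $\tbdy_j$ is $b_j/M$, which is strictly positive precisely because $b_j>0$ by the Lemma; equivalently every coefficient $1-b_j/M$ of $D'$ is $<1$, so $\lfloor D'\rfloor\le 0$ and the pair is klt.

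The main obstacle is establishing the two anticanonical formulas cleanly --- in particular verifying that $K_{X_w}+\bdy$ is Cartier, so that the pullback to $\tilde{X}_w$ is literally the $\Q$-Cartier class computed above --- together with the positivity $b_j>0$ coming from the Lemma; everything else is bookkeeping. The whole theorem turns on that single inequality $b_j>0$: it is exactly what forces each log discrepancy $b_j/M$ to be positive, and hence the klt property to hold uniformly in all characteristics.
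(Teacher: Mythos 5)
Your proposal is correct and follows essentially the same route as the paper: the same anticanonical formulas $K_{X_w}=-\sum(a_i+1)\bdy_i$ and $K_{\tilde X_w}=-\sum(b_j+1)\tbdy_j$, the same identification $-(K_{X_w}+\Bdy)\sim\frac{M+1}{M}\cL(\rho)$ for $\Q$-Cartierness and ampleness, and the same computation of the log pullback $D'=\sum(1-b_j/M)\tbdy_j$ on the Bott--Samelson resolution, with klt reduced via Koll\'ar--Mori to the single inequality $b_j>0$ from the Lemma. The only cosmetic difference is that you sketch the Bott--Samelson anticanonical formula by induction on the $\P^1$-bundle tower where the paper cites Ramanathan.
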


We will need a lemma.

\begin{lemma}
For all $i$, we have $b_i>0$.
\end{lemma}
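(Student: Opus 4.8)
The plan is to prove the explicit formula $\phi^*\cL(\rho)=\sum_{j=1}^\ell b_j\,\tbdy_j$ with
\[
  b_j=\langle\rho,\gamma_j^\vee\rangle,\qquad \gamma_j:=s_{i_\ell}s_{i_{\ell-1}}\cdots s_{i_{j+1}}(\alpha_{i_j}),
\]
for the fixed reduced word $w=s_{i_1}\cdots s_{i_\ell}$. Each $\gamma_j$ is a positive root: because $s_{i_j}s_{i_{j+1}}\cdots s_{i_\ell}$ is reduced, the element $v=s_{i_{j+1}}\cdots s_{i_\ell}$ satisfies $\ell(s_{i_j}v)=\ell(v)+1$, i.e.\ $\gamma_j=v^{-1}(\alpha_{i_j})$ is positive. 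Since $\rho$ is the sum of the fundamental weights we have $\langle\rho,\gamma_j^\vee\rangle\ge 1$ for every positive $\gamma_j$, so the formula yields the Lemma at once.

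To prove the formula I would induct on $\ell$ along the tower of $\P^1$-bundles $\tilde X_w=Z^{(\ell)}\to Z^{(\ell-1)}\to\cdots\to Z^{(0)}=\mathrm{pt}$, where $Z^{(m)}$ is the Bott--Samelson variety of $(i_1,\dots,i_m)$ and $\pi\colon Z^{(\ell)}\to Z^{(\ell-1)}=:Y$ forgets the last factor. Here $\tbdy_\ell$ is the image of the tautological section $\sigma$ of $\pi$ (the locus where the last coordinate is trivial), while $\tbdy_j=\pi^{-1}(\tbdy^{Y}_j)$ for $j<\ell$; hence $\operatorname{Pic}(Z^{(\ell)})=\pi^*\operatorname{Pic}(Y)\oplus\Z[\tbdy_\ell]$ and, inductively, the classes $[\tbdy_1],\dots,[\tbdy_\ell]$ form a $\Z$-basis of $\operatorname{Pic}(Z^{(\ell)})$. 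Writing $b_j(\mu)$ for the coefficient of $[\tbdy_j]$ in $\phi^*\cL(\mu)$, a homomorphism in the weight $\mu$, it suffices to show $b_j(\mu)=\langle\mu,\gamma_j^\vee\rangle$ for all $\mu$.

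The two geometric inputs are: (A) a general fibre $F$ of $\pi$ is carried by $\phi$ to a translate of the Schubert curve of type $i_\ell$, so $\deg\phi^*\cL(\mu)|_F=\langle\mu,\alpha_{i_\ell}^\vee\rangle$, giving the top coefficient $b_\ell(\mu)=\langle\mu,\alpha_{i_\ell}^\vee\rangle=\langle\mu,\gamma_\ell^\vee\rangle$; and (B) the normal bundle of the section is $\sigma^*[\tbdy_\ell]\isom\phi_{\ell-1}^*\cL(\alpha_{i_\ell})$, where $\phi_{\ell-1}\colon Y\to G/B$ is the map for the shorter word. Decomposing $\phi^*\cL(\mu)=\pi^*A_\mu+\langle\mu,\alpha_{i_\ell}^\vee\rangle[\tbdy_\ell]$ and pulling back along $\sigma$, where $\phi\circ\sigma=\phi_{\ell-1}$ forces $\sigma^*\phi^*\cL(\mu)=\phi_{\ell-1}^*\cL(\mu)$, I identify
\[
  A_\mu=\phi_{\ell-1}^*\cL(\mu)-\langle\mu,\alpha_{i_\ell}^\vee\rangle\,\phi_{\ell-1}^*\cL(\alpha_{i_\ell})=\phi_{\ell-1}^*\cL(s_{i_\ell}\mu).
\]
Reading off the coefficient of $\tbdy_j$ for $j<\ell$ then gives $b_j(\mu)=b^{Y}_j(s_{i_\ell}\mu)=\langle s_{i_\ell}\mu,(\gamma^{Y}_j)^\vee\rangle=\langle\mu,\gamma_j^\vee\rangle$, using the inductive hypothesis on $Y$ together with $\gamma_j=s_{i_\ell}(\gamma^{Y}_j)$, which closes the induction.

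I expect input (B) to be the main obstacle, as it requires computing the normal bundle of the section and, in particular, fixing the sign of the relevant character. The section sits at the $B$-fixed point $eB$ of the fibre $P_{i_\ell}/B$, whose tangent space is the one-dimensional $B$-module $\lieg_{-\alpha_{i_\ell}}\isom\C_{-\alpha_{i_\ell}}$; with the convention $\cL(\lambda)=G\times^B\C_{-\lambda}$ (so that $\deg\cL(\lambda)$ on a type-$i$ Schubert curve is $\langle\lambda,\alpha_i^\vee\rangle$), the associated line bundle on $Y$ is precisely $\phi_{\ell-1}^*\cL(\alpha_{i_\ell})$. Keeping these two sign conventions consistent is the only delicate point; the rest is bookkeeping. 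Finally, one can instead deduce the Lemma more softly: by the Chevalley formula $a_i=\langle\rho,\beta_i^\vee\rangle>0$, so $s_\rho$ vanishes along the whole of $X_w\setminus(BwB/B)$, and since $\phi$ maps each $\tbdy_j$ into this boundary, $\phi^*s_\rho$ must vanish along every $\tbdy_j$.
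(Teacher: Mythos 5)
Your proposal is correct, and it reaches the same formula $b_j=\langle\rho,\gamma_j^\vee\rangle$ as the paper by a genuinely different route. The paper's proof is a single intersection-theoretic computation: it produces $T$-invariant curves $C_i\subset\tilde X_w$ with $(C_i\cdot\tbdy_j)=\delta_{ij}$, so that $b_i=(\phi^*\cL(\rho)\cdot C_i)$, and then evaluates this by the projection formula as the degree of $\cL(\rho)$ on the $T$-invariant curve in $G/B$ joining $wB$ and $ws_{\gamma_i}B$, which is $\langle\rho,\gamma_i^\vee\rangle$ by the standard degree formula. You instead induct up the $\P^1$-bundle tower $Z^{(\ell)}\to Z^{(\ell-1)}\to\cdots$, using the fibre degree for the top coefficient and the normal bundle of the tautological section, $\sigma^*\O(\tbdy_\ell)\isom\phi_{\ell-1}^*\cL(\alpha_{i_\ell})$, to twist the weight by $s_{i_\ell}$ and descend; together with $\gamma_j=s_{i_\ell}(\gamma_j^Y)$ and $W$-invariance of the pairing this closes the induction. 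Your argument proves the stronger statement $\phi^*\cL(\mu)=\sum_j\langle\mu,\gamma_j^\vee\rangle\,\tbdy_j$ for \emph{every} weight $\mu$ (essentially the Chevalley-type formula on the Bott--Samelson variety), at the cost of the normal-bundle and sign bookkeeping you rightly flag as the delicate point; the paper's curve argument avoids all of that but only computes the pairing against $\rho$, which is all the Lemma needs. Your closing ``soft'' alternative --- pull back the $B$-invariant section of $\cL(\rho)$, which vanishes on the whole boundary of $X_w$ since all $a_i>0$, and note that $\phi$ maps every $\tbdy_j$ into that boundary --- is also valid and is the shortest path to the bare inequality $b_j>0$, but it leans on Chevalley's formula on $X_w$ itself to get $a_i>0$, which is precisely the external input the paper's self-contained computation on $\tilde X_w$ is designed to bypass (the paper then deduces $a_j=b_{i(j)}>0$ from the Lemma, rather than the other way around).
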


\begin{proof}
Let $(\alpha_1,\ldots,\alpha_\ell)$ be the sequence of simple roots corresponding to the reduced word defining $\tilde{X}_w$, and let $s_1,\ldots,s_\ell$ be the corresponding simple reflections.  For each $i$, set
\begin{align}\label{eq:vi}
  v_i &= s_1\cdots \hat{s}_i \cdots s_\ell = ws_{\gamma_i}, \\
\intertext{where}
\label{eq:gamma}
  \gamma_i &= s_\ell s_{\ell-1}\cdots s_{i+1}(\alpha_i).
\end{align}
We claim that
\begin{equation}\label{eq:bi}
  b_i = \langle \rho, \gamma_i^\vee \rangle = \hgt(\gamma_i),
\end{equation}
where $\hgt(\beta) = |\sum n_i|$ if the root $\beta = \sum n_i \alpha_i$ as a sum of simple roots.  (Since the expression $w=s_1\cdots s_\ell$ is \emph{reduced}, each $\gamma_i$ is a positive root.)

The claimed expression follows from Chevalley's formula for multiplying a Schubert class by a divisor \cite{chevalley}, but it is easy enough to prove directly.  We first review some basic geometry of the Bott-Samelson variety.  
We have $\tilde{X}_w = (P_{\alpha_1}\times \cdots \times P_{\alpha_\ell})/B^\ell$, where $P_{\alpha}$ is the minimal parabolic subgroup, and $B^\ell$ acts by
\[
  (p_1,p_2,\ldots,p_\ell)\cdot(b_1,b_2,\ldots,b_\ell)=(p_1b_1,b_1^{-1}p_2b_2,\ldots,b_{\ell-1}^{-1}p_\ell b_\ell).
\]
For each $i$, fix a representative $\dot{s}_i \in P_{\alpha_i}$.  The torus $T$ acts on $\tilde{X}_w$ by left multiplication, and its fixed points correspond to subsets $I\subseteq [\ell]:=\{1,\ldots,\ell\}$.  Specifically, the point $p(I)$ is defined by $p_i=\dot{s}_i$ if $i\in I$ and $p_i=e$ otherwise.  The divisor $\tbdy_i$ is defined by the equation $p_i = e$.

For each $i=1,\ldots,\ell$, one can find a $T$-invariant curve $C_i$ in $\tilde{X}_w$, whose fixed points are $p([\ell])$ and $p([\ell]\setminus\{i\})$.  By considering $T$-fixed points, it is easy to see that $C_i\cap \tbdy_i = p([\ell]\setminus\{i\})$, and $C_i\cap\tbdy_j = \emptyset$ if $i\neq j$.  It follows that $b_i = (\phi^*\cL(\rho)\cdot C_i)$.

By the projection formula, this intersection number is equal to $(\cL(\rho)\cdot \phi(C_i))$.  The curve $\phi(C_i)$ is the unique $T$-invariant curve in $X_w\subseteq G/B$ with fixed points $wB$ and $v_iB$.  The line bundle $\cL(\rho)$ on $G/B$ is invariant under the action of the Weyl group, so we may apply a translation by $w^{-1}$; now $w^{-1}\phi(C_i)$ is the curve with fixed points $eB$ and $w^{-1}v_iB = s_{\gamma_i}B$.  The formula \eqref{eq:bi} follows, since for any dominant weight $\lambda$ and any positive root $\beta$, if $C$ is the $T$-invariant curve in $G/B$ joining $eB$ and $s_\beta B$, we have $(\cL(\lambda)\cdot C) = \langle \lambda,\beta^\vee \rangle$.
\end{proof}

\begin{proof}[Proof of Theorem]
The canonical divisors for $X_w$ and $\tilde X_w$ are well known (see, e.g., \cite{ramanathan}).  We have
\begin{align*}
  K_{X_w} &= -(a_1+1)\bdy_1 - \cdots - (a_k+1)\bdy_k \\
\intertext{and}  
K_{\tilde X_w} &= -(b_1+1)\tbdy_1 - \cdots - (b_\ell+1)\tbdy_\ell.
\end{align*}

By the definition of $\Bdy$, the (integral) divisor
\begin{align*}
  MK_{X_w} + M\Bdy &= (-Ma_1-M + M-a_1)\bdy_1 + \cdots + (-Ma_1-M + M-a_1)\bdy_k \\
                   &= -(M+1)a_1\bdy_1 - \cdots - (M+1)a_k\bdy_k
\end{align*}
comes from a section of $\cL(-\rho)^{\otimes M+1}$, so $K_{X_w}+\Bdy$ is $\Q$-Cartier and anti-ample.

Now set $\tc_i = 1-b_i/M$, and let $\tBdy=\sum \tc_i\tbdy_i$.  From the definition, we have
\begin{align*}
  \phi^*(K_{X_w}+\Bdy) &= \frac{M+1}{M}(-b_1 \tbdy_1 - \cdots - b_\ell \tbdy_\ell) \\
                       &= (-b_1-1+\frac{1}{M}(M-b_1))\tbdy_1 + \cdots + (-b_\ell-1+\frac{1}{M}(M-b_\ell))\tbdy_\ell \\
                       &= K_{\tilde{X}_w} + \tBdy.
\end{align*}               
The Lemma implies that $\lfloor \tBdy \rfloor \leq 0$.

Next we show that $\phi_*\tBdy=\tBdy$.  Since $X_w$ is normal, for each $j=1,\ldots,k$, there is a unique $i(j)$ such that $\phi(\tbdy_{i(j)})=\bdy_j$; the remaining $\tbdy_i$'s are collapsed by $\phi$.  It follows that for any divisor $\tilde{D}=\sum_i d_i\,\tbdy_i$, we have $\phi_*D = \sum_j d_{i(j)}\,\bdy_j$.  For the same reason, $\phi_*\phi^*\cL(\rho)=\cL(\rho)$, so $a_j = b_{i(j)}$.  The claim follows.

Finally, the map $\phi:\tilde X_w \to X_w$ is a rational resolution (see \cite{ramanathan}), so in particular we have $\phi_*K_{\tilde X_w} = K_{X_w}$.  Applying \cite[Lemma~2.30 and Corollary 2.31]{km}, it follows that $(X_w,\Delta)$ is klt, and the Theorem is proved.
\end{proof}

\begin{remark}
The formula
\[
  -K_{\tilde{X}_w} = \sum_i (\langle \rho, \gamma_i^\vee \rangle + 1)\, \tbdy_i
\]
proved in the Lemma can be found in \cite[Proof of Proposition~10]{mr}, and is also valid for Bott-Samelson varieties corresponding to non-reduced words.  In this generality the coefficients on the right-hand side may be negative, since some of the $\gamma_i$'s will be negative roots.  Note, however, that the anticanonical divisor is always effective.
\end{remark}

\medskip
\noindent
{\em Acknowledgements.}  We thank Karl Schwede and Karen Smith for asking us this question, as well as Shinnosuke Okawa and the referee for helpful comments.



\end{document}